\newtheorem{theorem}{Theorem}
\newtheorem*{main theorem}{Main Theorem}
\theoremstyle{remark}     
\theoremstyle{definition}
\def\N{\mathbb{N}}
\def\R{\mathbb{R}}     
\def\Z{\mathbb{Z}}
\def\norm#1{\|#1\|}
\begin{document}
\title{Sets in $\R^d$ with slow-decaying density that avoid an unbounded collection of distances} 
\author{Alex Rice}
 
\begin{abstract} For any $d\in \N$ and any function $f:(0,\infty)\to [0,1]$ with $f(R)\to 0$ as $R\to \infty$, we construct a set $A \subseteq \R^d$ and a sequence $R_n \to \infty$ such that $\norm{x-y} \neq R_n$ for all $x,y\in A$ and $\mu(A\cap B_{R_n})\geq f(R_n)\mu(B_{R_n})$ for all $n\in \N$, where $B_R$ is the ball of radius $R$ centered at the origin and $\mu$ is Lebesgue measure. This construction exhibits a form of sharpness for a result established independently by Furstenberg-Katznelson-Weiss, Bourgain, and Falconer-Marstrand, and it generalizes to any metric induced by a norm on $\R^d$.

\end{abstract}

\address{Department of Mathematics, Millsaps College, Jackson, MS 39210}
\email{riceaj@millsaps.edu}

\maketitle 
\setlength{\parskip}{5pt}   

\section{introduction}

 For $d\in \N$ and $R>0$, we let $B_R$ denote the standard open ball in $\R^d$ of radius $R$, centered at the origin, and for $A\subseteq \R^d$, we let $A_R$ denote $A\cap B_R$. Further, we let $\mu$ denote Lebesgue measure on $\R^d$, and $\norm{\cdot}$ denote the standard Euclidean norm on $\R^d$. We say that $A\subseteq \R^d$ has \textit{positive upper density} if $\limsup_{R\to \infty} \mu(A_R)/\mu(B_R)>0. $
Bourgain \cite{Bour} showed via harmonic analysis that if $A\subseteq \R^2$ has positive upper density, then $A$ determines all sufficiently large distances, meaning there exists $R_0=R_0(A)$ such that for all $R>R_0$, there exist $x,y\in A$ with $\norm{x-y}=R$. Further, he generalized his proof to establish the analogous result in $\R^d$ with distances replaced by isometric copies of dilates of any $d$-point configuration that is not contained in a single $(d-2)$-dimensional plane. In his paper, Bourgain alludes to the $d=2$ result as having been previously established via ergodic theory by Katznelson and Weiss, and this argument was later published by those two authors in joint work with Furstenberg \cite{KW}. A short geometric proof of the $d=2$ case was also found by Falconer and Marstrand \cite{Falc} around the same time. 

Written contrapositively, this common theorem states that if $A\subseteq \R^2$ misses a sequence of distances tending to infinity, then $\lim_{R\to \infty} \mu(A_R)/\mu(B_R)=0$. One could hope to quantitatively strengthen this conclusion with an estimate of the form $\mu(A_R)/\mu(B_R) \leq f(R)$ for all $R\geq R_0(A)$, where $f(R)\to 0$ independent of $A$, such as $f(R)=1/\log R$. Here we establish that such a quantitative improvement is impossible, and hence the aforementioned results are, in a sense, sharp. In particular, given $d\in \N$ and a function $f:(0,\infty)\to [0,1]$ tending to $0$ as $R\to \infty$, we construct a set $A\subseteq \R^d$ and a sequence $R_n\to \infty$ such that $\norm{x-y}\neq R_n$ for all $x,y\in A$ and $\mu(A_{R_n})\geq f(R_n)\mu(B_{R_n})$ for all $n\in \N$. In addition to working in any dimension $d$, the construction generalizes to any metric induced by a norm on $\R^d$.

\section{Motivation}
Since the set of distances between integer lattice points in $\R^d$, which we denote by $\Delta_d$, is a fairly sparse, discrete set consisting entirely of square roots of integers, a natural candidate for a dense set $A\subseteq \R^d$ with lots of missing distances is a a union of \textit{thickened} lattice points, meaning small balls around lattice points. However, for $d\geq 2$, based on standard results concerning sums of squares, the gaps between consecutive elements of $\Delta_d$ tend to $0$, hence the full integer lattice, thickened at even a single point, determines all sufficiently large distances. Alternatively, since $\Delta_d$ has no limit points, we can certainly thicken a cube $L=[a,b]^d\cap\Z^d$ by a fixed $\epsilon>0$ and avoid a single distance $R\notin \Delta_d$. The relative density of the thickened cube in $[a,b]^d$ is about $\epsilon^d$, and we can choose $L$ to be as large as we want with respect to $\epsilon$. We iterate this process to form a union of cubes, far enough apart so as to control the interactions between them, that avoids a rapidly growing sequence of distances that fail to occur in the lattice. The crucial detail is that, while the closing gaps between elements of $\Delta_d$ force the density of our set to decay to $0$, we have control over the relationship between the relative density and the scale of the cube at each step of the construction.

\section{Construction for Arbitrary Norms}

Recall that $\rho:\R^d \to [0,\infty)$ is called a \textit{norm} if $\rho(x)>0$ for all $x\neq\vec{0}$, $\rho(\lambda x)=|\lambda|\rho(x)$ for all $\lambda \in \R$ and all $x\in \R^d$ (homogoneity), and $\rho(x+y)\leq \rho(x)+\rho(y)$ for all $x,y\in \R^d$ (triangle inequality). It is a standard fact that all norms on $\R^d$ are equivalent, meaning there exist constants $c_{\rho},C_{\rho}>0$ such that \begin{equation}\label{equiv} c_{\rho}\norm{x}\leq \rho(x) \leq C_{\rho}\norm{x} \text{ for all } x\in \R^d. \end{equation}

\noindent For a norm $\rho$ on $\R^d$, we let $B^{\rho}_R=\{ x\in \R^d: \rho(x)<R\}$, and $A^{\rho}_R=A\cap B^{\rho}_R$. Homogeneity of $\rho$ ensures that \begin{equation}\label{homog} \mu(B^{\rho}_R)=\omega_{\rho}R^d, \end{equation} where $\omega_{\rho}=\mu(B^{\rho}_1) \geq c_{\rho}^d\mu(B_1)$. A norm $\rho$ on $\R^d$ also determines a metric on $\R^d$, by defining the \textit{$\rho$-distance} between $x$ and $y$ to be $\rho(x-y)$. Our main result is the following.

\begin{theorem} \label{main} Suppose $f:(0,\infty)\to [0,1]$, $d\in \N$, and $\rho$ is a norm on $\R^d$. If $f(R)\to 0$ as $R\to \infty$, then there exists $A\subseteq \R^d$ and a sequence of positive numbers $R_n\to \infty$ such that:

\begin{itemize} \item[(i)] $\rho(x-y)\neq R_n$ for all $x,y\in A$ and all $n\in \N$. \\ \item[(ii)] $\mu(A^{\rho}_{R_n}) \geq f(R_n)\mu(B^{\rho}_{R_n})$ for all $n\in \N$.

\end{itemize}

\end{theorem}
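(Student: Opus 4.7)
The plan is to construct $A = \bigcup_{n \geq 1} A_n$ inductively, with each $A_n$ a translated, thickened lattice cube of the form
\[
A_n = \bigcup_{l \in L_n} B^{\rho}_{\epsilon_n}(y_n + l),
\]
where $L_n = \{0, \ldots, M_n - 1\}^d \subseteq \Z^d$ is a lattice cube of side $M_n$, $\epsilon_n \in (0, c_\rho/2)$ is a thickening radius small enough that the balls in $A_n$ are disjoint (so $\mu(A_n) = M_n^d \omega_\rho \epsilon_n^d$), and $y_n \in \R^d$ is a translation. At step $n$, assuming $R_1 < \cdots < R_{n-1}$ and $A_1, \ldots, A_{n-1}$ have been chosen so that conclusions (i)--(ii) of the theorem hold for $k \leq n-1$, I will choose $R_n$ and the four parameters of $A_n$ so that (i)--(ii) are preserved through $k = n$.

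I first pick $R_n > 2R_{n-1}$ (ensuring $R_n \to \infty$), taking it large enough that $f(R_n)$ falls below a threshold to be specified (possible since $f \to 0$); I also require $R_n$ to avoid the countable set $\{\rho(v) : v \in \Z^d\}$ of nonzero lattice $\rho$-norms, and to avoid the (finite) $\rho$-distance set of $A_1 \cup \cdots \cup A_{n-1}$. Next I pick the cube side $M_n$ and the thickening $\epsilon_n$: the density condition reduces to $M_n \epsilon_n \gtrsim f(R_n)^{1/d} R_n$, while internal avoidance of each $R_k$, $k \leq n$, requires
\[
\epsilon_n \;<\; \tfrac{1}{2}\, \min_{k \leq n}\, \min_{v \in (L_n - L_n)\setminus\{0\}} |\rho(v) - R_k|,
\]
which is strictly positive because $L_n - L_n$ is finite and each $R_k$ avoids $\{\rho(v) : v \in \Z^d\}$. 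Finally I choose $y_n$ so that $y_n + L_n \subseteq B^{\rho}_{R_n - \epsilon_n}$ (placing $A_n \subseteq B^{\rho}_{R_n}$) and so that, for each $m < n$ and each $k \leq n$, the cross-distance set $\{\rho((y_n - y_m) + v) : v \in L_n - L_m\}$ stays outside the $2(\epsilon_n + \epsilon_m)$-neighborhood of $R_k$; each such condition excludes $\rho(y_n - y_m)$ from a finite union of short intervals, leaving a large admissible region.

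The main obstacle will be the simultaneous fit of the conditions on $M_n$, $\epsilon_n$, and $y_n$: the density requirement pushes $M_n \epsilon_n$ large, internal avoidance forces $\epsilon_n$ small (and, since the gap $\min_{v \in (L_n - L_n)\setminus\{0\}} |\rho(v) - R_k|$ shrinks as $L_n$ grows, the allowable $\epsilon_n$ itself depends on $M_n$), and the fit constraint caps $M_n \lesssim R_n/(C_\rho \sqrt{d})$. The lever for reconciling them is the freedom, granted by $f \to 0$, to take $R_n$ arbitrarily large, making $f(R_n)$ small enough that the density requirement can be met within the budget allowed by the avoidance gap at the chosen cube scale. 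Passing to the limit $A = \bigcup_n A_n$ then yields a set that inherits both (i) and (ii), completing the construction.
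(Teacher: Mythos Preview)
Your strategy---$A$ as a union of thickened integer cubes, one per scale---is exactly the paper's. The differences are in execution, and one of them matters.

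The substantive issue is the order in which $R_n$, $M_n$, $\epsilon_n$ are chosen. As you write it, the ``threshold'' for $f(R_n)$ is left unspecified until after $M_n$ and $\epsilon_n$ are picked, but those depend on $R_n$; and because you include $k=n$ in the internal-avoidance bound while only requiring $A_n\subseteq B^\rho_{R_n}$ (so $\mathrm{diam}_\rho A_n$ may approach $2R_n$), your $\epsilon_n$ is forced below $\tfrac12\,\mathrm{dist}\bigl(R_n,\rho((L_n-L_n)\setminus\{0\})\bigr)$. For the Euclidean norm in $d\ge 4$ one has $\rho(\Z^d)=\{\sqrt m:m\ge 0\}$, whose gaps near $R_n$ have size $\asymp 1/R_n$ regardless of how $R_n$ is chosen; hence $M_n\epsilon_n=O(1)$, and the density bound $M_n\epsilon_n\gtrsim f(R_n)^{1/d}R_n$ fails for, say, $f(R)=1/\log R$. ``Taking $R_n$ larger'' only makes this worse. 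The paper breaks the loop in two moves: (i) it confines the $n$-th piece to $B^\rho_{R_n/2}$, so its diameter is $<R_n$ and the $k=n$ term is vacuous; and (ii) it thickens the $n$-th cube by $\epsilon_{n-1}/4$, a parameter fixed at the \emph{previous} step to satisfy $|\rho(x)-R_j|>\epsilon_{n-1}$ for every $x\in\Z^d$ and $j\le n-1$ (possible since $\rho(\Z^d)$ is discrete, with no limit points). With $\epsilon_{n-1}$ already frozen, one simply picks $R_n$ large enough that $f(R_n)\le(\epsilon_{n-1}/16C_\rho)^d$, which is immediate from $f\to 0$.

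For the cross terms, your plan to select $y_n$ avoiding forbidden configurations is misstated and heavier than needed: the condition $|\rho((y_n-y_m)+v)-R_k|>2(\epsilon_n+\epsilon_m)$ does not reduce to excluding $\rho(y_n-y_m)$ from short intervals---it excludes $y_n$ itself from a union of thin $\rho$-annuli, roughly $n^2 M_n^d$ of them, so one would need a genuine volume argument. The paper sidesteps this entirely by placing the $n$-th piece in the annulus $B^\rho_{R_n/2}\setminus B^\rho_{10R_{n-1}}$: every cross-distance to earlier pieces then lies in $(9R_{n-1},R_n)$, missing all $R_k$ with no further choice of $y_n$ required. (Also, the $\rho$-distance set of $A_1\cup\cdots\cup A_{n-1}$ is not finite, merely bounded; your condition $R_n>2R_{n-1}$ already handles it.)
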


\begin{proof} Fix $d\in \N$, $f:(0,\infty)\to [0,1]$ with $f(R)\to 0$ as $R\to \infty$, a norm $\rho$ on $\R^d$, and constants $c_{\rho},C_{\rho}>0$ satisfying (\ref{equiv}). Let $1=R_0<R_1<R_2<\cdots$ and $1=\epsilon_0\geq\epsilon_1\geq\epsilon_2\geq\cdots$ be any sequences satisfying \begin{enumerate}[(a)]  \item $|\rho(x)-R_{j}|>\epsilon_{n}$ for all $x\in \Z^d$ and all $1\leq j\leq n$, \\ \item $R_n \geq 100 R_{n-1}$, and \\ \item $f(R_n) \leq \left(\frac{\epsilon_{n-1}}{16C_{\rho}}\right)^d$ \end{enumerate} for all $n\in \N$. We note that property (a) is achievable because, for any interval $[a,b]\subseteq [0,\infty)$, $$\{x\in \Z^d: \rho(x)\in [a,b]\} \subseteq \{x\in \Z^d: \norm{x}\in [a/C_{\rho},b/c_{\rho}]\}  $$ is finite, hence $\rho(\Z^d)$ is a discrete set with no limit points. 

\noindent Let $L_n$ be a translate of $\{0,1,\dots,\lfloor R_n/(4C_{\rho}) \rfloor\}^d$ lying in $B^{\rho}_{R_n/2} \setminus B^{\rho}_{10R_{n-1}}$, which exists by (b), let $$P_n=\{y\in \R_d: \rho(x-y)<\epsilon_{n-1}/4 \text{ for some }x\in L_n\},$$ and let $A=\bigcup_{n=1}^{\infty} P_n.$ For condition (ii), we see that $P_n \subseteq B^{\rho}_{R_n}$, hence $$\mu(A^{\rho}_{R_n})\geq \mu(P_n) \geq \omega_{\rho}\left(\frac{\epsilon_{n-1}}{16C_{\rho}}\right)^{d}R_n^d\geq f(R_n)\mu(B^{\rho}_{R_n}),$$ where the second inequality uses that $P_n$ is a disjoint union of at least $[R_n/(4C_{\rho})]^d$ $\rho$-balls of radius $\epsilon_{n-1}/4$, and the third inequality comes from (c) and (\ref{homog}). 

\noindent Further, we argue inductively that, for each $n\in \N$, $\rho(x-y)\neq R_j$ for all $x,y \in A^{\rho}_{R_n}$ and all $j\in \N$, which suffices to establish (i).  $A^{\rho}_{R_n}\subseteq B^{\rho}_{R_n/2}$, so $A^{\rho}_{R_n}$ certainly has no $R_j$ $\rho$-distances for $j\geq n$, which in particular establishes the base case $n=1$. We now fix $n\geq 2$ and suppose $A^{\rho}_{R_{n-1}}$ has no $R_j$ $\rho$-distances for all $j\in \N$, and we wish to say the same for $A^{\rho}_{R_n}=A^{\rho}_{R_{n-1}}\cup P_n$. Since $A^{\rho}_{R_{n-1}}$ lies inside $B^{\rho}_{R_{n-1}}$ and $P_n$ lies outside $B^{\rho}_{10R_{n-1}}$, the $\rho$-distance between any point in $A^{\rho}_{R_{n-1}}$ and any point in $P_n$ is at least $9R_{n-1}$, so it suffices to show that for every $n\in \N$, $\rho(x-y)\neq R_j$ for all $x,y \in P_n$ and all $1\leq j\leq n-1$. 

\noindent By the triangle inequality, the $\rho$-distances between points in $P_n$ are all within $\epsilon_{n-1}/2$ of $\rho$-distances between points in $L_n$. However, by (a), no such distances can equal $R_j$ for $1\leq j\leq n-1$, and property (i) follows. \end{proof}

\section{Concluding Remarks}

It is worth noting that Theorem \ref{main} is only informative for a particular norm if the aforementioned positive result, that sets of positive upper density determine all sufficiently large distances, holds with respect to that norm. For two examples where the result fails, if $\rho$ is the $\ell^{\infty}$ or $\ell^{1}$ norm, then $B^{\rho}_1$ is a $2d$ or $2^d$ faced polytope, respectively, and $\rho(\Z^d)$ is the set of nonnegative integers, so one can take $A$ to be the full integer lattice thickened by $\epsilon=1/8$, and $A$ has positive density but misses all half-integer distances. Kolountzakis \cite{Kol} showed that this type of example is the only obstruction, establishing that the positive result holds for a norm $\rho$ on $\R^d$ provided $B^{\rho}_1$ is not a polytope, expressly because an analog of the construction from the previous sentence, a thickening of a well-distributed set with a separated collection of $\rho$-distances, is impossible. It is not known when exactly such a construction is possible in the event that $B^{\rho}_1$ is a polytope, so the full converse of the Kolountzakis result is still open.  Whatever the precise collection of norms may be, Theorem \ref{main} ensures that whenever the positive result holds, it is sharp in the sense discussed in the introduction.  


\noindent \textbf{Acknowledgements:} The author would like to thank Alex Iosevich for posing this problem at the 2019 Combinatorial and Additive Number Theory (CANT) conference, held at the CUNY Graduate Center in memory of Jean Bourgain. The author would also like to thank Neil Lyall for his helpful discussions.

\end{document}